\def\ZZ{\mathbb{Z}}
\def\QQ{\mathbb{Q}}
\def\CC{\mathbb{C}}
\def\HH{\mathbb{H}}
\DeclareMathOperator{\id}{id}
\DeclareMathOperator{\PSL}{PSL}
\DeclareMathOperator{\QMF}{\mathit{QM}}
\DeclareMathOperator{\MF}{\mathit{M}}
\DeclareMathOperator{\Hol}{\mathcal{O}}
\DeclareMathOperator{\Sol}{Sol}
\DeclareMathOperator{\Mat}{Mat}
\def\dl{\langle\kern-.22em\langle}
\def\dr{\rangle\kern-.22em\rangle}
\def\dal{\langle\kern-.22em\langle}
\def\dar{\rangle\kern-.22em\rangle}
\def\dbl{[\kern-.12em[}
\def\dbr{]\kern-.12em]}
\def\dpl{(\kern-.2em(}
\def\dpr{)\kern-.2em)}
\def\dast{\ast\kern-.03em\ast}
\def\tast{\ast\kern-.26em\ast\kern-.26em\ast}
\title[Extremal quasimodular forms]{Extremal quasimodular forms of lower depth with integral Fourier coefficients}
\author{Tsudoi Kaminaka and Fumiharu Kato}
\subjclass[2010]{Primary: 11F30, Secondary: 34M03}
\keywords{Quasimodular forms, modular differential equations}
\begin{document}
\pagestyle{myheadings}
\theoremstyle{plain}
\newtheorem{thm}[subsection]{Theorem}
\newtheorem{prop}[subsection]{Proposition}
\newtheorem{lem}[subsection]{Lemma}
\newtheorem{cor}[subsection]{Corollary}
\newtheorem{probs}[subsection]{Problems}
\newtheorem{cla}{Claim}[subsection]
\theoremstyle{definition}
\newtheorem{dfn}[subsection]{Definition}
\newtheorem{ntn}[subsection]{Notation}
\newtheorem{exa}[subsection]{Example}
\newtheorem{exas}[subsection]{Examples}
\newtheorem{assum}[subsection]{Assumption}
\newtheorem{sit}[subsection]{Situation}
\theoremstyle{remark}
\newtheorem{rem}[subsection]{Remark}
\newtheorem{note}[subsection]{Note}
%
\makeatletter
  \renewcommand{\theequation}{%
   \arabic{equation}}
  \@addtoreset{equation}{section}
 \makeatother
\def\theequation{\thesection.\arabic{equation}}
\maketitle

\begin{abstract}
We show that, based on Grabner's recent results on modular differential equations satisfied by quasimodular forms, there exist only finitely many normalized extremal quasimodular forms of depth $r$ that have all Fourier coefficients integral for each of $r=1,2,3,4$, and partly classifies them, where the classification is complete for $r=2,3,4$; in fact, we show that there exists no normalized extremal quasimodular forms of depth $4$ with all Fourier coefficients integral.
Our result disproves a conjecture by Pellarin.
\end{abstract}

\setcounter{section}{0}
\section{Introduction}\label{sec-intro}
The notion of Quasimodular forms has been first introduced by M.\ Kaneko and D.\ Zagier in the middle of 1990's (\cite{KZ}). 
Since then, the connections with various fields have been gradually clarified, and accordingly, many people have been paying attention to this subject.
Especially rich is the relation with linear differential equations, which was already pointed out in early papers by M.\ Kaneko and M.\ Koike (\cite{KK2}\cite{KK3}), and has been at one of the centers of interest in this fertile field of mathematics.
Not a few people have entered into this research field and made a lot of progress.
See, for example, the reference list in \cite{Gr} for these previous works.

Our research in this paper owes much to P.J.\ Grabner's recent work \cite{Gr}, in which he has carried out an impressive and extensive study of the so-called ``modular differential equations'', which allows him to deduce a lot of characterization and existence results on certain type of quasimodular forms, the so-called {\em balanced quasimodular forms}.
Applying these results to a more specific type of quasimodular forms, the {\em extremal quasimodular forms}, which has been introduced by Kaneko-Koike \cite{KK1}, Grabner could give a very concrete description of them.

In this paper, we aim to obtain even more concrete results for extremal quasimodular forms of degree at most $4$, based on Grabner's results.
Note that, for $r=1,2,3,4$ and $w\geq 2r$ with $w-2\neq 2r$, the normalized extremal quasimodular form of weight $w$ and depth $r$ is known to exist, and is unique (\cite{PN}).
Our interest lies in when the Fourier coefficients of these extremal quasimodular forms are integers, and if so, are they positive or not. 
In connection with this, Pellarin \cite{PN} has conjectured that, for each depth $r=1,2,3,4$, the extremal quasimodular forms of depth $r$ have integral Fourier coefficients (i.e, all coefficients are integers) for infinitely many weights (\cite[Conjecture 3.9]{PN}).
Our main theorem not only disproves this conjecture, but also partly determine the weights and depths for which the normalized extremal quasimodular forms have all coefficients integral. 
To state it, let us say that a quasimodular form $f$ has {\em integral $q$-expansion}, if the $q$-expansion $f(q)$ belongs to $\ZZ\dbl q\dbr$.

\begin{thm}\label{thm-main}
For $r=1,2,3,4$, let $\mathcal{E}_r$ be the set of weights $w$ such that the normalized extremal quasimodular form $f^{(r)}_w$ of weight $w$ and depth $r$ has integral $q$-expansion.
Then the sets $\mathcal{E}_r$ are finite for $r=1,2,3,4$.
Moreover, we have
\begin{gather}
\mathcal{E}_1\subset\{2,6,8,10,12,14,16,18,20,22,24,28,30,32,34,38,54,58,68,80,114,118\}\\
\mathcal{E}_2=\{4,8\}\\
\mathcal{E}_3=\{6\}\\
\mathcal{E}_4=\emptyset
\end{gather}
\end{thm}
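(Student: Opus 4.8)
The plan is to reduce the integrality of the $q$-expansion of $f^{(r)}_w$ to a prime-by-prime condition on explicit Fourier coefficients, and then to show by a $p$-adic growth argument that this condition holds only for finitely many weights $w$. First I would invoke Grabner's description of $f^{(r)}_w$ as the extremal solution of the order-$(r+1)$ modular linear differential equation, and extract from it the linear recurrence satisfied by the normalized Fourier coefficients $a_n = a_n(w)$. Writing this recurrence in the form $L_n(w)\,a_n = P_n(w; a_0,\dots,a_{n-1})$ with $P_n$ having $\ZZ$-polynomial coefficients in $n$ and $w$, I would pin down the leading factor $L_n(w)$; for extremal forms this factor is essentially the product of $(n-\rho)$ over the indicial roots $\rho$ of the equation at the cusp, and it is precisely through $L_n(w)$ that primes enter the denominators of the $a_n$.

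Since $f^{(r)}_w$ has integral $q$-expansion if and only if every $a_n$ lies in $\ZZ_{(p)}$ for every prime $p$, I would, for each depth $r$, fix a small prime $p$ and track the valuation $v_p(a_n(w))$ along the recurrence. The key quantitative claim is that $\sum_{k\le n} v_p(L_k(w))$ grows linearly in $n$ while the numerators cannot compensate, so that $v_p(a_n(w))\to-\infty$ for all but finitely many $w$. Equivalently, if Grabner's results put $f^{(r)}_w$ into a closed hypergeometric form, namely an integral modular factor times a series ${}_{r+1}F_r$ evaluated at a Hauptmodul lying in $\ZZ\dbl q\dbr$, then integrality in $q$ becomes equivalent to integrality of the hypergeometric Taylor coefficients, and Landau's criterion on the (weight-dependent, linearly varying) hypergeometric parameters fails once $w$ is large. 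Either route produces an explicit bound on $w$: a relatively weak obstruction for $r=1$ (allowing weights up to $118$) and progressively stronger ones for $r=2,3,4$.

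With $w$ confined to a finite range, I would finish by direct computation. For depths $2,3,4$ one computes enough coefficients of each candidate $f^{(r)}_w$ and tests $p$-integrality at the relevant primes; this both confirms $\mathcal{E}_2=\{4,8\}$ and $\mathcal{E}_3=\{6\}$, and shows that every depth-$4$ candidate already fails, giving $\mathcal{E}_4=\emptyset$. For $r=1$ the same procedure yields the displayed finite superset, but deciding exact membership for each remaining weight appears to need finer congruence information, which is why only an inclusion is asserted there.

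The main obstacle I anticipate is the rigorous proof that $v_p(a_n(w))\to-\infty$: it is not enough to observe that $L_n(w)$ is occasionally divisible by $p$, because the numerators $P_n$ may carry matching factors of $p$ and cause cancellation. Controlling this requires a genuine invariant, for instance a sharp lower bound on $v_p$ of the numerator combined with an exact Kummer-type count of the power of $p$ dividing $L_n(w)$, or, in the hypergeometric formulation, a careful check that the failure of the Landau step inequalities is not repaired by the modular normalizing factor. Balancing denominator growth against possible numerator cancellation is the crux of the finiteness statement; once it is in place, the explicit bounds and the finite verification are comparatively routine.
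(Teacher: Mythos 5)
Your proposal inverts the quantifiers relative to what is actually needed, and the step you yourself flag as the crux is a genuine gap. You fix a weight $w$ and propose to show that $v_p(a_n(w))\to-\infty$ as $n\to\infty$ for all but finitely many $w$; but no argument is given for how the linear growth of $\sum_{k\le n}v_p(L_k(w))$ survives cancellation against the numerators $P_n$, and such cancellation is not a technicality --- it must occur completely for every $w\in\mathcal{E}_r$, so any valuation bound you prove has to be weight-sensitive in a way your sketch does not supply. Likewise, the alternative route via a hypergeometric closed form and Landau's criterion asserts without justification that integrality of the $q$-expansion is equivalent to integrality of the hypergeometric Taylor coefficients; the substitution of a Hauptmodul in $\ZZ\dbl q\dbr$ and the modular prefactor can each repair non-integrality, so this equivalence would itself need proof. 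As written, the finiteness of $\mathcal{E}_r$ --- the heart of the theorem --- is not established.

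The missing idea is to swap the roles of $n$ and $w$: fix $n$ \emph{small} (in fact $n=1$ or $2$) and regard $a_n$ as an explicit rational function of the weight parameter. The paper computes, from the matrix form of the Frobenius recursion, e.g.\ for depth $1$ and $w=6k$ that $a(1)=48k-\frac{60k}{k+1}$, so that coprimality of $k$ and $k+1$ forces $(k+1)\mid 60$; for depth $4$ the denominator $(5k+1)^4$ outgrows the numerator, so the fractional part lies strictly between $0$ and $1$ for all $k$ beyond an explicit bound. This yields the finite candidate lists with no $p$-adic asymptotics at all, and the remaining work is exactly the finite verification you describe, plus explicit divisor-sum identities (for $(w,r)=(6,3)$ and $(8,2)$) to confirm the weights that do belong to $\mathcal{E}_2$ and $\mathcal{E}_3$. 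Your plan's back end (finite search) matches the paper, but without the elementary ``first-coefficient as a function of the weight'' reduction, the front end does not close.
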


\begin{rem}\label{rem-main}
We could not completely determine the set $\mathcal{E}_1$. 
It is (almost) trivial that $2,6,8,10\in\mathcal{E}_1$.
In the appendix of \cite{PN}, G.\ Nebe showed by using theta series associated to the Leech lattice that $14\in\mathcal{E}_1$.
Furthermore, P.J.\ Grabner informed the second author in private communication that the fact $14\in\mathcal{E}_1$ follows also by applying \cite[(12.7)]{BO} (and the first equality of \cite[(12.4)]{BO}) to the expression
$$
f^{(1)}_{14}=\frac{1}{4146}\sum^{\infty}_{n=1}(n\sigma_{11}(n)-n\tau(n))q^n,
$$
and similarly that, due to the third equality of \cite[(12.6)]{BO} applied to 
$$
f^{(1)}_{12}=\frac{1}{1050}\sum^{\infty}_{n=1}(n\sigma_9(n)-\tau(n))q^n,\\
$$
we have $12\in\mathcal{E}_1$.
We thank the referee to point out that the last fact implies $16\in\mathcal{E}_1$ (cf.\ \S\ref{sub-d1w4}).
In sum, what we know so far is that $2,6,8,10,12,14$ and $16$ belong to $\mathcal{E}_1$.
\end{rem}

\begin{rem}\label{rem-main2}
In his recent paper \cite{Gr2}, Grabner proved that all but finitely many $q$-coefficients of normalized extremal quasimodular forms of depth $\leq 4$ are positive, which partly solves the conjecture by Kaneko-Koike \cite{KK1}.
In \S\ref{sec-cases}, as a byproduct of our argument, we will show that, at least for $(w,r)=(6,3), (8,2)$, the $q$-coefficients of the normalized extremal quasimodular form of weight $w$ and depth $r$ are all positive.
In fact, our method of proving positivity can treat other cases, which, however, we are not going to discuss in this paper.
\end{rem}

The composition of this paper is as follows.
In the next section, we will collect some basics on the modular differential equations.
Our treatment of them will be given from the viewpoint of differential operators, whence introducing {\em modular differential operators}.
The explicit calculation of Fourier coefficients will become smoother by matrix interpretation of these operators.
All of this will be the topic of this section.
In \S\ref{sec-eqmf}, we will briefly describe the basics on extremal quasimodular forms, especially in relation with MDO's, following Grabner's result in \cite{Gr}.
Then, from \S\ref{sec-d1} onward, we will devote ourselves to the concrete calculations.
In the first four sections (\S\ref{sec-d1} $\sim$ \S\ref{sec-d4}), we will carry out explicit calculation of the first $q$-coefficients of the extremal quasimodular forms of depth at most $4$ in order to narrow down the possibility of the weights that falls in $\mathcal{E}_r$, and in the last section, we show some concrete examples that actually admit integral $q$-expansion.
The proof of the main theorem (Theorem \ref{thm-main}) will be done by combination of these concrete calculations.

The authors thank Masanobu Kaneko and Yuichi Sakai in Kyushu University for valuable comments and encouragements.
The second author thanks Peter Grabner, Federico Pellarin and Gabriele Nebe for valuable comments on the first draft of this paper.
Thanks are also due to the referee for careful read and several valuable comments, which simplified arguments in \S\ref{sub-d1w4} and \S\ref{sec-cases}

\subsection{Convention}
\begin{itemize}
\item Modular forms and quasimodular forms throughout this paper are those on $\PSL(2,\ZZ)$.
\item $\sigma_k(n)$ denotes the $k$-th divisor function, i.e., 
$$
\sigma_k(n)=\sum_{d|n}d^k.
$$
\item $E_2,E_4,E_6,\ldots$ will denote the standard Eisenstein series; i.e., 
$$
E_{2k}=1-\frac{4k}{B_{2k}}\sum^{\infty}_{n=1}\sigma_{2k-1}(n)q^n,
$$
where $B_k$ is the $k$-th Bernoulli number.
\end{itemize}

\section{Modular differential equations}\label{sec-MDE}
\begin{dfn}\label{dfn-MDO}
A {\em modular differential operator} (abbr.\ MDO), as defined in \cite[\S2]{Gr}, is a linear differential operator on the space $\Hol(\HH)$ of holomorphic functions on the upper-half plane $\HH$ of the form
\begin{equation}\label{eqn-MDE}
D=B_m\partial^{r+1}_{w-r}+B_{m+2}\partial^r_{w-r}+\cdots+B_{m+2r}\partial_{w-r}+B_{m+2r+2},
\end{equation}
where 
\begin{itemize}
\item[(a)] $B_m,B_{m+2},\ldots,B_{m+2r+2}$ are modular forms of weights $m,m+2,\ldots,m+2r+2$, respectively, and $B_m(i\infty)=1$,
\item[(b)] $\partial^r_w$ are the {\em iterated Serre differentials}, which are defined recursively by
\begin{equation}
\partial^0_w=\id,\quad \partial_w=\frac{1}{2\pi i}\frac{d}{dz}-\frac{w}{12}E_2
,\quad \partial^{r+1}_w=\partial_{w+2r}\partial^r_w.
\end{equation}
\end{itemize}
The MDO (\ref{eqn-MDE}) is said to be {\em normalized} if $m=0$ and $B_m=1$.
\end{dfn}

If $D$ is an MDO, then the solution space $\Sol(D)=\{f\in\Hol(\HH)\mid Df=0\}$ of $D$ is naturally acted on by $\PSL(2,\ZZ)$ (\cite[Lem.\ 2.4]{Gr}).
Any $T$-invariant solution $f$ of $D$ admits Fourier expansion of the form 
\begin{equation}\label{eqn-holosol}
f=q^{\lambda}\sum^{\infty}_{n=0}a(n)q^n, \quad a(0)\neq 0
\end{equation}
where $q=e^{2\pi iz}$ and $\lambda$ is a non-negative integer.
Conversely, any holomorphic solution as in (\ref{eqn-holosol}) of $D$ considered around the cusp $i\infty$ extends uniquely to a $T$-invariant holomorphic solution on $\HH$.
Note that the Serre differential $\partial_w$ with respect to the $q$-coordinate is given by
\begin{equation}
\partial_w=q\frac{d}{dq}-\frac{w}{12}E_2.
\end{equation}

\subsection{Matrix representation}\label{sub-matrix}
We denote by $\Mat(m,n)$ the space of all $m\times n$ matrices $A=[a_{ij}]$ over $\CC$, whose rows (resp.\ columns) are numbered as $i=0,\ldots,m-1$ (resp.\ $j=0,\ldots,n-1)$, and write $\Mat(n)=\Mat(n,n)$.
Consider for any $n\leq m$ the map $\pi_{nm}\colon\Mat(m)\rightarrow\Mat(n)$ that maps each $A\in\Mat(m)$ to its upper-left $m\times m$ block, and let $\Mat(\infty)=\varprojlim_n\Mat(n)$ together with the projections $\pi_n\colon\Mat(\infty)\rightarrow\Mat(n)$ be the projective limit of the projective system $\{\Mat(n),\pi_{nm}\}$ thus obtained.

Let $D$ be an MDO. Then for any $\lambda,n\in\ZZ_{\geq 0}$, $D$ induces a $\CC$-limear selfmap on $q^{\lambda}\CC[q]/q^{\lambda+n}\CC[q]$.
We denote the matrix representation of this map with respect to the basis $\{q^{\lambda},q^{\lambda+1},\ldots,q^{\lambda+n-1}\}$ by $D(\lambda;n)$.
The matrix $D(\lambda,n)$ is an element of $\Mat(n)$, and we have $\pi_{nm}(D(\lambda;m))=D(\lambda;n)$ for any $n\leq m$.
Hence one has a unique $D(\lambda)\in\Mat(\infty)$ such that $\pi_n(D(\lambda))=D(\lambda;n)$ for any $n$.
We call $D(\lambda)$ the {\em matrix representation} of $D$.

\begin{exa}\label{exa-matrix1}
Let $B=\sum^{\infty}_{n=0}b(n)q^n$ be a modular form.
Then the $(i,j)$-entry of $B(\lambda)$ is given as follows:
$$
B(\lambda)_{ij}=\begin{cases}0&(i<j),\\ b(i-j)&(i\geq j).\end{cases}
$$
\end{exa}

\begin{exa}\label{exa-matrix2}
The matrix representation of the Serre differential $\partial_w$ is given as follows:
$$
\partial_w(\lambda)_{ij}=\begin{cases}0&(i<j),\\ \lambda+i-\frac{w}{12}&(i=j),\\ 2w\cdot\sigma_1(i-j)&(i>j).\end{cases}
$$
\end{exa}

\subsection{Indicial equation}\label{sub-indicial}
The $(0,0)$-entry of $D(\lambda)$ is the degree-$\lambda$ coefficient of $Dq^{\lambda}$.
If $D$ is given as in (\ref{eqn-MDE}), then it is a polynomial of $\lambda$ given by
\begin{equation}\label{eqn-indicial1}
p_D(\lambda)=\sum^{r+1}_{k=0}B_{m+2k}(i\infty)q_{r+1-k}(\lambda,w),
\end{equation}
where
\begin{equation}\label{eqn-indicial2}
q_l(x,w)=\bigg(x-\frac{w-r}{12}\bigg)\bigg(x-\frac{w-r+2}{12}\bigg)\cdots\bigg(x-\frac{w-r+2l-2}{12}\bigg).
\end{equation}
The equation $p_D(\lambda)=0$ is the {\em indicial equation}, whose roots are {\em characteristic exponents}.
For any $n\in\ZZ_{\geq 0}$, the $(n,n)$-entry of $D(\lambda)$ is equal to $p_D(\lambda+n)$.

\subsection{Solution by matrix entries}\label{sub-solutionmatrix}
Suppose that $\lambda=\lambda_0$ is a simple root of $p_D(\lambda)=0$, and that $\lambda=\lambda_0+n$ for any $n\in\ZZ_{>0}$ is not a root, then by Frobenius method we obtain the unique normalized solution of $D$ 
$$
f=q^{\lambda_0}\sum^{\infty}_{n=0}a(n)q^n,\quad a(0)=1
$$
with
\begin{equation}\label{eqn-coeffsol}
a(n)=\sum_{0=i_0<i_1<\cdots<i_s=n}(-1)^{s+1}\prod^s_{k=0}\frac{D(\lambda)_{i_{k+1},i_k}}{D(\lambda)_{i_{k+1},i_{k+1}}}
\end{equation}
for $n\in\ZZ_{>0}$, where the sum is taken over all sequences of integers of the form $0=i_0<i_1<\cdots<i_s<i_{s+1}=n$.

\section{Extremal quasimodular forms}\label{sec-eqmf}
\subsection{Quasimodular forms}\label{sub-qmf}
We denote by $\MF_w$ the $\CC$-vector space of all modular forms of weight $w$.
The $\CC$-vector space of {\em quasimodular forms} (due to M.\ Kaneko and D.\ Zagier \cite{KZ}) of weight $w$ and depth $\leq r$ is 
$$
\QMF^r_w=\bigoplus^r_{k=0}E^k_2M_{w-2k}.
$$

It follows from the definition that any quasimodular form is invariant under $T$, and admits Fourier expansion.

Quasimodular forms of depth at most $4$ have several special features, as manifested in several places in \cite{Gr}, one of which is the following simple dimension formula:
For $r\leq 4$ and $w(r+1)\equiv 0\pmod{12}$, 
$$
\dim\QMF^r_w=\bigg\lfloor\frac{w(r+1)}{12}\bigg\rfloor+\begin{cases}0&(\textrm{$r=4$ and $w\equiv 10$ (mod $12$)})\\ 1&(\textrm{otherwise})\end{cases}
$$

Almost throughout this paper, we will focus on the so-called {\em extremal quasimodular forms} (due to M.\ Kaneko and M.\ Koike \cite{KK1}) of depth at most $4$.
\begin{dfn}[{\rm Kaneko-Koike \cite{KK1}}]\label{dfn-eqmf}
Let $f$ be a quasimodular form of weight $w$ and depth $r$, i.e., an element of $\QMF^r_w\setminus\QMF^{r-1}_w$. Then $f$ is said to be {\em extremal} if its Fourier expansion is of the form 
\begin{equation}\label{eqn-eqmf}
f=q^{\lambda}\sum^{\infty}_{n=0}a(n)q^n,\quad a(0)\neq 0,
\end{equation}
where $\lambda=\dim\QMF^r_w-1$.
If moreover $a(0)=1$, $f$ is further said to be {\em normalized}.
\end{dfn}

It was conjectured in \cite{KK1} that extremal quasimodular forms should exist (and should be unique when normalized) for pairs $(w,r)$ with naturally required numerical constraints.
Grabner \cite{Gr} has conducted an extensive study of quasimodular forms as solutions of modular differential equations, and has obtained crucial results on the so-called {\em balanced quasimodular forms}, which generalizes the notion of extremal quasimodular forms.
Let us present here some of his results restricted on extremal quasimodular forms.

\begin{thm}[{\rm cf.\ \cite[4.8 \& 4.11]{Gr}}]\label{thm-grabner1}
Let $r$ be a positive integer at most $4$ and $w$ a positive even integer such that $w(r+1)\equiv 0\pmod{12}$.

$(1)$ Every extremal quasimodular form of depth $r$ and weight $w$ is a solution of a normalized MDO of the form
\begin{equation}\label{eqn-MDOeqmf}
\begin{split}
D=\partial^{r+1}_{w-r}+a_4E_4\partial^{r-1}_{w-r}+a_6E_6\partial^{r-2}_{w-r}+\cdots+a_{2r}&E_{2r}\partial_{w-r}+a_{2r+2}E_{2r+2}\\
&(a_4,a_6,\ldots,a_{2r+2}\in\QQ).
\end{split}
\end{equation}

$(2)$ Conversely, if the indicial equation of an MDO as in {\rm (\ref{eqn-MDOeqmf})} is of the form $p_D(x)=x^r(x-\lambda)$, where $\lambda$ is a positive integer, then the solution of $D$ of the form as in {\rm (\ref{eqn-eqmf})} $($which is unique up to non-zero factor$)$ is an extremal quasimodular form of weight $w$ and depth $r$.
\end{thm}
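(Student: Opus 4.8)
\emph{The plan} is to treat the two parts together, with the heart being part (2), and to deduce part (1) from it using the uniqueness of normalized extremal forms cited above. Throughout, the pivotal structural fact I would establish first is a depth-preservation lemma: for the specific weight parameter $w-r$, the Serre differential does not raise depth, i.e.
\begin{equation*}
\partial^k_{w-r}\colon \QMF^r_w\longrightarrow\QMF^r_{w+2k}\qquad(k\geq 0).
\end{equation*}
To see this, write a depth-$r$ form as $f=f_rE_2^r+(\text{lower order in }E_2)$ with $f_r\in\MF_{w-2r}$, and use Ramanujan's identity $qE_2'=\tfrac{1}{12}(E_2^2-E_4)$ together with $qg'=\partial_kg+\tfrac{k}{12}E_2g$ for $g\in\MF_k$. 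A direct computation shows that the coefficient of $E_2^{r+1}$ in $qf'$ equals $\tfrac{w-r}{12}f_r$, which is cancelled exactly by the corresponding term of $\tfrac{w-r}{12}E_2f$; hence $\partial_{w-r}f$ again has depth $\leq r$, and iterating (the weight parameter $w-r+2k$ is precisely ``weight minus depth'' at each stage) gives the claim. This is what makes every term $E_{2k}\partial^{r+1-k}_{w-r}f$ of the operator (\ref{eqn-MDOeqmf}) land in the single space $\QMF^r_{w+2r+2}$.

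For part (2) I would run the Frobenius method of \S\ref{sub-solutionmatrix}. First note that, by (\ref{eqn-indicial1})--(\ref{eqn-indicial2}) with $E_{2k}(i\infty)=1$ and the forced vanishing of the $E_2$-term, the $x^r$-coefficient of $p_D$ comes from $q_{r+1}(x,w)$ alone and equals $-\tfrac{(r+1)w}{12}$; so the hypothesis $p_D(x)=x^r(x-\lambda)$ forces $\lambda=\tfrac{(r+1)w}{12}=\dim\QMF^r_w-1$. Since $\lambda$ is a simple root with $\lambda>0$ and none of $\lambda+1,\lambda+2,\dots$ is a root, (\ref{eqn-coeffsol}) produces a unique normalized solution $f=q^\lambda\sum_{n\geq 0}a(n)q^n$ with $a(0)=1$. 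The task is then to show that $f$ is quasimodular of weight $w$ and depth exactly $r$. Here I would use that $\Sol(D)$ is an $(r+1)$-dimensional $\PSL(2,\ZZ)$-stable space (\cite[Lem.\ 2.4]{Gr}); the $r$-fold root $0$ at the cusp produces companion solutions involving $\log q,\dots,\log^{r-1}q$, and matching the $\PSL(2,\ZZ)$-action on $\Sol(D)$ with the weight-$w$ slash action identifies $f$ with a quasimodular form whose depth is read off from this logarithmic (unipotent) structure. The depth-preservation lemma guarantees $f\in\QMF^r_w$, the maximal vanishing order $\lambda=\dim\QMF^r_w-1$ makes $f$ extremal, and the multiplicity of $0$ being exactly $r$ rules out $f\in\QMF^{r-1}_w$.

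For part (1), given an extremal $f$ of weight $w$ and depth $r$ with $\lambda_0=\dim\QMF^r_w-1=\tfrac{(r+1)w}{12}$, I would first construct, by pure linear algebra, the unique normalized MDO of the form (\ref{eqn-MDOeqmf}) whose indicial polynomial is $x^r(x-\lambda_0)$. By (\ref{eqn-indicial1})--(\ref{eqn-indicial2}) one has $p_D(x)=q_{r+1}(x,w)+\sum_{l=0}^{r-1}a_{2(r+1-l)}\,q_l(x,w)$. As noted, the $x^r$-coefficient of $q_{r+1}(x,w)$ is $-\tfrac{(r+1)w}{12}=-\lambda_0$ regardless of the $a$'s, so it already matches $x^r(x-\lambda_0)$; since $q_0,\dots,q_{r-1}$ are monic of degrees $0,\dots,r-1$ they form a basis for polynomials of degree $\leq r-1$, whence the remaining $r$ coefficients $a_4,\dots,a_{2r+2}$ are uniquely determined by requiring the lower-order coefficients to vanish. (For $r\leq 4$ each space $\MF_{2k}$ with $4\leq 2k\leq 2r+2$ is one-dimensional, spanned by $E_{2k}$, so these scalars are exactly the free parameters.) By part (2), the normalized solution of this $D$ is an extremal quasimodular form of weight $w$ and depth $r$; by the uniqueness of such forms (\cite{PN}) it must coincide with $f$, so $f\in\Sol(D)$.

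The main obstacle is the representation-theoretic core of part (2): proving that the abstract Frobenius solution $f$ is genuinely a quasimodular form of weight $w$, rather than merely a $\PSL(2,\ZZ)$-finite holomorphic function, and that its depth is exactly $r$. Everything hinges on translating the unipotent monodromy at the cusp (coming from the $r$-fold root $0$) into the quasimodular transformation law, equivalently on identifying the companion solutions with the components of $f$ under the weight-$w$ slash action. This is precisely the analysis carried out in Grabner's study of the $\PSL(2,\ZZ)$-module $\Sol(D)$, and reproducing it in full would require the local and global monodromy theory of the modular differential equation; the depth-preservation lemma and the indicial bookkeeping above are the comparatively routine ingredients.
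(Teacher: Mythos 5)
This theorem is imported by the paper from Grabner's work (\cite[Thms.\ 4.8 \& 4.11]{Gr}); the paper gives no proof beyond the citation, so your attempt has to be measured against Grabner's argument itself. The bookkeeping you do carry out is correct and verifiable: for $f=\sum_{k\le r}f_kE_2^k\in\QMF^r_w$ the coefficient of $E_2^{r+1}$ in $q\,\frac{df}{dq}$ is indeed $\frac{w-2r}{12}f_r+\frac{r}{12}f_r=\frac{w-r}{12}f_r$, so $\partial_{w-r}$ and its iterates (with the weight parameter shifting in step with the weight) preserve depth $\le r$; the $x^r$-coefficient of $q_{r+1}(x,w)$ is $-\sum_{j=0}^{r}\frac{w-r+2j}{12}=-\frac{(r+1)w}{12}$, which forces $\lambda=\frac{(r+1)w}{12}=\dim\QMF^r_w-1$; and since $q_0,\dots,q_{r-1}$ are monic of distinct degrees $\le r-1$, the scalars $a_4,\dots,a_{2r+2}$ are uniquely determined by the requirement $p_D(x)=x^r(x-\lambda)$. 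Your reduction of part (1) to part (2) via the uniqueness of normalized extremal forms from \cite{PN} is also legitimate in this setting.

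The genuine gap is exactly where you flag it, and flagging it does not close it: in part (2) you never prove that the normalized Frobenius solution $f=q^{\lambda}(1+\cdots)$ is a quasimodular form of weight $w$ and depth exactly $r$, rather than merely a $T$-invariant holomorphic function whose $\PSL(2,\ZZ)$-orbit spans the $(r+1)$-dimensional space $\Sol(D)$. Note that the depth-preservation lemma does not supply this: it shows $D$ maps $\QMF^r_w$ into $\QMF^r_{w+2r+2}$, but the target is at least as large as the source (for $r=1$ the two dimensions are in fact equal), so the restriction of $D$ to $\QMF^r_w$ has no kernel for dimension reasons alone. The passage from ``$0$ is a characteristic exponent of multiplicity exactly $r$ and $\lambda$ is the remaining simple exponent'' to ``the exponent-$\lambda$ solution lies in $\bigoplus_{k\le r}E_2^k\MF_{w-2k}$ and not in $\QMF^{r-1}_w$'' is the entire content of Grabner's Theorems 4.8 and 4.11 (the identification of the companion, logarithmic solutions with the components of $f$ under the weight-$w$ slash action, via the unipotent structure at the cusp), and your sentence ``matching the $\PSL(2,\ZZ)$-action on $\Sol(D)$ with the weight-$w$ slash action'' names that strategy without executing any of it. Since your part (1) is deduced from part (2), both halves of the theorem remain unproven; what you have is a correct account of the routine surrounding computations, together with a citation of the result you were asked to prove.
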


The significance of the last theorem lies in that it paves the way to give explicit displays of (the Fourier expansion of) extremal quasimodular forms of depth lower than or equal to $4$, especially combined with the matrix calculation as in \ref{sub-solutionmatrix}.

\section{Depth one}\label{sec-d1}
If the depth $r=1$, then $\lambda$ in (\ref{eqn-eqmf}) is given by $\lambda=\lfloor\frac{w}{6}\rfloor$.
Hence one has three cases: $(w,\lambda)=(6k,k),(6k+2,k),(6k+4,k)$.

\subsection{Weight {\boldmath $\equiv 0\pmod 6$}}\label{sub-d1w0}
Due to Theorem \ref{thm-grabner1}, the normalized extremal quasimodular form $f_w$ of depth $1$ and weight $w=6k$ ($k>0$) is the solution of the MDO
$$
D_1=\partial^2_{w-1}-\frac{w^2-1}{12^2}E_4.
$$
The unique positive characteristic exponent is equal to $k$.
We look at the $3\times 3$ matrix $D_1(k;3)$ calculated as
$$
D_1(k;3)=\begin{bmatrix}0&0&0\\ 12k(1-4k)&k+1&0\\ 72k(1-5k)&12k(3-4k)&2(k+2)\end{bmatrix}.
$$
It follows from \ref{sub-solutionmatrix} that the first three terms of the Fourier expansion of $f_w$ are given by $f_{6k}=q^k\sum^{\infty}_{n=0}a(n)q^n$, where
\begin{equation}\label{eqn-d1w0}
\begin{split}
&a(0)=1,\quad a(1)=48k-\frac{60k}{k+1},\\
&a(2)=36(32k^2-123k+315)-2520\frac{10k+9}{(k+1)(k+2)}.
\end{split}\end{equation}
We consider the condition for these numbers to be integers.
First, since $k$ and $k+1$ are mutually prime, the value of $a(1)$ already restricts $k$ into a finite set of integers, viz., those $k$ such that $k+1$ divides $60$.
We have thus 11 candidates $k=1,2,3,4,5,9,11,14,19,29,59$.
We next look at $a(2)$, especially its fractional part $2520(10k+9)/\{(k+1)(k+2)\}$, and dismiss $k=11,14,29,59$.
So $k$ must be one of the $7$ numbers $1,2,3,4,5,9,19$.

\subsection{Weight {\boldmath $\equiv 2\pmod 6$}}\label{sub-d1w2}
The normalized extremal quasimodular form $f_{w+2}$ of depth $1$ and weight $w+2=6k+2$ is $E_2$ if $k=0$, and for $k>0$, given by 
$$
f_{6k+2}=\frac{12}{6k+1}\partial_{6k-1}f_{6k}
$$
due to \cite[\S6.1]{Gr}.
We calculate, by means of Example \ref{exa-matrix2} and (\ref{eqn-d1w0}), the first three Fourier coefficients of $f_{6k+2}=q^k\sum^{\infty}_{n=0}b(n)q^n$ as
\begin{equation}\label{eqn-d1w2}
\begin{split}
&b(0)=1,\quad b(1)=48k+60-\frac{84}{k+1},\\
&b(2)=36(32k^2+37k-247)+2520\frac{10k+7}{(k+1)(k+2)}.
\end{split}\end{equation}
Similarly to the previous case, first by $b(1)$, we find that $k$ should be among the 11 numbers $1,2,3,5,6,11,13,20,27,41,83$, and then by $b(2)$, we dismiss $20,27,41,83$.
So in this case $k$ must be one of the $7$ numbers $1,2,3,5,6,11,13$.

\subsection{Weight {\boldmath $\equiv 4\pmod 6$}}\label{sub-d1w4}
Again due to \cite[\S6.1]{Gr} the normalized extremal quasimodular form $f_{w+4}$ of depth $1$ and weight $w+4=6k+4$ ($k>0$) is given by 
$$
f_{6k+4}=E_4f_{6k}.
$$
It immediately follows from this that $f_{6k+4}$ has integral $q$-expansion if and only if so does $f_{6k}$, since $E_4$, and hence $E^{-1}_4$ as well, belongs to $1+q\ZZ\dbl q\dbr$.
Thus, by \S\ref{sub-d1w0}, $k$ must be one of the $7$ numbers $1,2,3,4,5,9,19$.

\section{Depth two}\label{sec-d2}
If the depth $r=2$, then $\lambda$ in (\ref{eqn-eqmf}) is given by $\lambda=\lfloor\frac{w}{4}\rfloor$.
Hence one has two cases: $(w,\lambda)=(4k,k),(4k+2,k)$.

\subsection{Weight {\boldmath $\equiv 0\pmod 4$}}\label{sub-d2w0}
The normalized extremal quasimodular form $f_w$ of depth $2$ and weight $w=4k$ $(k>0)$ is the solution of the MDO
$$
D_2=\partial^3_{w-2}-\frac{3w^2-4}{12^2}E_4\partial_{w-2}-\frac{(w+1)(w-2)^2}{6\cdot 12^2}E_6,
$$
which has the unique positive characteristic exponent $k$. 
In this case, it is actually enough to look at the $2\times 2$ block
$$
D_2(k;2)=\begin{bmatrix}0&0\\ -8k(k^2+3k-1)&(k+1)^2\end{bmatrix}.
$$
Thus, the first two terms of $f_{4k}=q^k\sum^{\infty}_{n=0}a(n)q^n$ are given by
\begin{equation}\label{eqn-d2w0}
a(0)=1,\quad a(1)=8k-\frac{8k(k-2)}{(k+1)^2}.
\end{equation}
Since $k$ and $k+1$ are mutually prime, for $a(1)$ to be an integer it is necessary that $(k+1)^2$ divides $8(k-2)$.
This is possible if and only if $k=1,2$.

\subsection{Weight {\boldmath $\equiv 2\pmod 4$}}\label{sub-d2w2}
Due to \cite[\S6.2]{Gr} the normalized extremal quasimodular form $f_{w+2}$ of depth $2$ and weight $w+2=4k+2$ ($k>0$) is given by 
$$
f_{4k+2}=\frac{6}{4k+1}\partial_{4k-2}f_{4k}.
$$
We calculate, by means of Example \ref{exa-matrix2} and (\ref{eqn-d2w0}), the first two Fourier coefficients of $f_{4k+2}=q^k\sum^{\infty}_{n=0}b(n)q^n$ as
\begin{equation}\label{eqn-d2w2}
b(0)=1,\quad b(1)=8k+32-\frac{8(4k+7)}{(k+1)^2}.
\end{equation}
It is easy to see that $b(1)$ is an integer if and only if $k=1$, but this doesn't happen, since there exists no quasimodular form of weight $6$ and depth $2$ (cf.\ \cite[p.458]{KK1}).

\section{Depth three}\label{sec-d3}
If the depth $r=3$, then $\lambda$ in (\ref{eqn-eqmf}) is given by $\lambda=\lfloor\frac{w}{3}\rfloor$.
Hence one has three cases: $(w,\lambda)=(6k,2k),(6k+2,2k),(6k+4,2k+1)$.

\subsection{Weight {\boldmath $\equiv 0\pmod 6$}}\label{sub-d3w0}
Due to Theorem \ref{thm-grabner1}, the normalized extremal quasimodular form $f_w$ of depth $3$ and weight $w=6k$ ($k>0$) is the solution of the MDO
$$
D_3=\partial^4_{w-3}-\frac{3w^2-5}{72}E_4\partial^2_{w-3}-\frac{w^3-3w^2+5}{216}E_6\partial_{w-3}-\frac{(w+1)(w-3)^3}{6912}E^2_4
$$
The unique positive characteristic exponent is equal to $2k$.
We calculate $D_3(2k;3)$ as
$$
D_3(2k;3)=\begin{bmatrix}0&0&0\\ -12k(2k+1)(2k^2+5k-1)&(2k+1)^3&0\\ 288k(4k^3+6k^2-7k+1)&-12k(4k^3+36k^2+27k-15)&16(k+1)^3\end{bmatrix}.
$$
It follows from \ref{sub-solutionmatrix} that the first three terms of the Fourier expansion of $f_w$ are given by $f_{6k}=q^{2k}\sum^{\infty}_{n=0}a(n)q^n$, where
\begin{equation}\label{eqn-d3w0}
\begin{split}
&a(0)=1,\quad a(1)=k\bigg(6+\frac{18(2k-1)}{(2k+1)^2}\bigg),\\
&a(2)=k\bigg(18k+63-\frac{27(4k^4+48k^3+71k^2+10k+3)}{(k+1)^3(2k+1)^2}\bigg).
\end{split}\end{equation}
We claim that $a(1)\in\ZZ$ implies $k=1$.
Indeed, since $k$ and $2k+1$ are mutually prime, $(2k+1)^2$ has to divide $18(2k-1)$.
Since $(2k+1)^2>18(2k-1)$ for $k>7$, one only have to check $k=1,2,\ldots,7$, and thus we find the $k=1$ is the only possibility.

\subsection{Weight {\boldmath $\equiv 2\pmod 6$}}\label{sub-d3w2}
Due to \cite[\S6.3]{Gr} the normalized extremal quasimodular form $f_{w+2}$ of depth $3$ and weight $w+2=6k+2$ ($k>0$) is given by 
$$
f_{6k+2}=\frac{4}{6k+1}\partial_{6k-3}f_{6k}.
$$
We calculate, by means of Example \ref{exa-matrix2} and (\ref{eqn-d3w0}), the first two Fourier coefficients of $f_{6k+2}=q^{2k}\sum^{\infty}_{n=0}b(n)q^n$ as
\begin{equation}\label{eqn-d3w2}
\begin{split}
&b(0)=1,\quad b(1)=6k+21-\frac{9(6k+5)}{(2k+1)^2},\\
&b(2)=9(k+1)(2k+13)-\frac{27(64k^4+190k^3+159k^2+28k+7)}{(2k+1)^2(k+1)^3}.
\end{split}\end{equation}
One sees easily that $b(1)$ is an integer only if $k=1$; indeed, since $(2k+1)^2>9(6k+5)$ for $k>13$, one only have to check $k=1,2,\ldots,13$.
But there exists no quasimodular form of weight $8$ and depth $3$.

\subsection{Weight {\boldmath $\equiv 4\pmod 6$}}\label{sub-d3w4}
Again due to \cite[\S6.3]{Gr} the normalized extremal quasimodular form $f_{w+4}$ of depth $3$ and weight $w+4=6k+4$ ($k>0$) is given by 
$$
f_{6k+4}=\frac{2(6k+3)^2}{27(6k+1)(6k+2)^3}\bigg(\frac{(6k+1)(18k+1)}{48}E_4-\partial^2_{w-3}\bigg)f_{6k}
$$
From this, we can calculate the first two Fourier coefficients of $f_{6k+4}=q^{2k+1}\sum^{\infty}_{n=0}c(n)q^n$ as
\begin{equation}\label{eqn-d3w4}
c(0)=1,\quad c(1)=6(k+3)-\frac{3(3k+2)(3k+4)}{2(k+1)^3}.
\end{equation}
Since $2(k+1)^3>3(3k+2)(3k+4)$ for $k>12$, we check the cases $k=1,2,\ldots,12$, and find that $c(1)$ can never be an integer.

\section{Depth four}\label{sec-d4}
If the depth $r=4$, then one has six cases: $(w,\lambda)=(12k,5k),(12k+2,5k),(12k+4,5k+1),(12k+6,5k+2),(12k+8,5k+3),(12k+10,5k+3)$.

\subsection{Weight {\boldmath $\equiv 0\pmod{12}$}}\label{sub-d4w0}
The normalized extremal quasimodular form $f_w$ of depth $4$ and weight $w=12k$ ($k>0$) is the solution of the MDO
\begin{equation*}
\begin{split}
D_5=\partial^5_{w-4}&-\frac{5}{72}(w^2-2)E_4\partial^3_{w-4}-\frac{5}{432}(w^3-3w^2+6)E_6\partial^2_{w-4}\\ &-\frac{15w^4-120w^3+280w^2-496}{20736}E^2_4\partial_{w-4}-\frac{(w-4)^4(w+1)}{62208}E_4E_6
\end{split}
\end{equation*}
The unique positive characteristic exponent is equal to $5k$.
We calculate $D_5(5k;5)$ as
{\scriptsize 
\begin{equation*}
\begin{split}
&D_5(5k;5)=\\ 
&\left[\begin{matrix}0&0\\ -24k(211k^4+370k^3+90k^2-1)&(5k+1)^4\\ 72k(1349k^4+1780k^3-40k^2-200k+16)&-24k(211k^4+1110k^3+750k^2+60k-31)\\ -96k(4291k^4-2130k^3-4410k^2+1350k-81)&72k(1349k^4+3560k^3+50k^2-1240k+121)\\ -168k(8491k^4+20920k^3-22560k^2+4800k-256)&-96k(4291k^4-3550k^3-11730k^2+4850k-341)\end{matrix}\right.\\
&\left.\begin{matrix}
0&0&0\\ 0&0&0\\ 2(5k+2)^4&0&0\\ -24k(211k^4+1850k^3+2070k^2+300k-211)&3(5k+3)^4&0\\ 72k(1349k^4+5340k^3+200k^2-3960k+496)&-24k(211k^4+2590k^3+4050k^2+840k-781)&4(5k+4)^4\end{matrix}\right]
\end{split}
\end{equation*}}
It follows from \ref{sub-solutionmatrix} that the first three terms of the Fourier expansion of $f_w$ are given by $f_{6k}=q^{5k}\sum^{\infty}_{n=0}a(n)q^n$, where
\begin{equation}\label{eqn-d5w0}
\begin{split}
a(0)=&1\\
a(1)=&k\bigg(8+\frac{64k^4 + 4880k^3 + 960k^2 - 160k - 32}{(5 k + 1)^{4}}\bigg)\\
a(2)=&36k\big(356168 k^{9} + 1655115 k^{8} + 2916520 k^{7} + 2053130 k^{6} + 514604 k^{5} + 1611 k^{4} \\ &- 7300 k^{3} + 1160 k^{2} + 128 k - 16\big)\frac{1}{ (5 k + 1)^{4}(5 k + 2)^{4}}\\
a(3)=&32k\big(676363032 k^{14} + 5871071835 k^{13} + 22218453445 k^{12} + 45563807970 k^{11} \\ &+ 52449490244 k^{10} + 32410195422 k^{9} + 9395505420 k^{8} + 1068698970 k^{7} \\ &+ 405209936 k^{6} + 205193691 k^{5} + 13155691 k^{4} - 4967520 k^{3} - 219672 k^{2} \\ &+ 47952 k - 1296\big) \frac{1}{(5 k + 1)^{4}  (5 k + 2)^{4}  (5 k + 3)^{4}}\\ 
a(4)=&6k \big(4566803192064 k^{19} + 63266677462080 k^{18} + 401294985696140 k^{17} \\ &+ 1503115744273725 k^{16} + 3613880784409904 k^{15} + 5700525954443508 k^{14} \\ &+ 5816263091692920 k^{13} + 3712529153286830 k^{12} + 1502426035274784 k^{11} \\ &+ 548595090655756 k^{10} + 271944869947516 k^{9} + 85717030521645 k^{8} \\ &- 1106326811376 k^{7} - 4903195968296 k^{6} + 73922086048 k^{5} \\ &+ 175610335952 k^{4} - 2231627136 k^{3} - 1787539968 k^{2} \\ &+ 102629376 k - 2322432\big) \frac{1}{(5 k + 1)^{4}  (5 k + 2)^{4}  (5 k + 3)^{4}  (5 k + 4)^{4}}
\end{split}
\end{equation}
We need to check that there exists no positive integer $k$ such that $a(1)\in\ZZ$.
Since $k$ and $5k+1$ are mutually prime, it suffices to show that $(64k^4 + 4880k^3 + 960k^2 - 160k - 32)/(5k+1)^4$ can never be an integer.
Since the denominator exceeds the numerator for $k>7$, we check the cases $k=1,2,\ldots,7$, and find that none of them makes the fraction integral.

\subsection{Other weights}\label{sub-d4other}
Using (\ref{eqn-d5w0}) and \cite[Prop.\ 6.4]{Gr} one can compute first terms of the normalized extremal quasimodular forms $f_{w+2},f_{w+4},f_{w+6},f_{w+8},f_{w+10}$ of weights $w+2,w+4,w+6,w+8,w+10$, respectively.
The second Fourier coefficients of them (the weight and the degree are denoted respectively in the superscripts and subscripts) are calculated as follows:
\begin{equation}
\begin{split}
a^{w+2}_{5k+1}&=\frac{24(211k^5+579k^4+238k^3+6k^2-9k-1)}{(5k+1)^4}\\
a^{w+4}_{5k+2}&=\frac{24(211k^5+777k^4+784k^3+328k^2+60k+4)}{(5k+2)^4}\\
a^{w+6}_{5k+3}&=\frac{24(211k^5+903k^4+1286k^3+822k^2+243k+27)}{(5k+3)^4}\\
a^{w+8}_{5k+4}&=\frac{24(211k^5+1101k^4+2032k^3+1744k^2+712k+112)}{(5k+4)^4}\\
a^{w+10}_{5k+4}&=\frac{24(211k^5+1310k^4+2720k^3+2560k^2+1124k+186)}{(5k+4)^4}\\
\end{split}
\end{equation}

We need to show that these values are not integers for all positive integers $k$.
To check the first one, we consider
$$
5^4a^{w+2}_{5k+1}=24\cdot 211k+9845-\frac{125k^{4} +2112100k^{3} + 1488030k^{2} + 336964k + 24845}{(5k+1)^4},
$$
where the last fraction is smaller than $1$ for $k>4223$.
So we only have to check that $a^{w+2}_{5k+1}$ is not an integer for $k=1,2,\ldots,4223$, which can be done promptly with an easy computer calculation.

Similarly, we have 
{\small 
\begin{equation*}
\begin{split}
5^4a^{w+4}_{5k+2}&=24\cdot 211k+10546-\frac{250k^{4} + 1824400k^{3} + 2217840k^{2} + 868384k + 108736}{(5k+2)^4},\\ 
5^4a^{w+6}_{5k+3}&=24\cdot 211k+9519-\frac{375k^{4} + 1824900k^{3} + 3255210k^{2} + 1905444k + 366039}{(5k+3)^4},\\ 
5^4a^{w+8}_{5k+4}&=24\cdot 211k+10220-\frac{500k^{4} + 2113600k^{3} + 4849920k^{2} + 3697984k + 936320}{(5k+4)^4},\\ 
5^4a^{w+10}_{5k+4}&=24\cdot 211k+15236-\frac{500k^{4} + 1825600k^{3} + 4648320k^{2} + 3938464k + 1110416}{(5k+4)^4},\\ 
\end{split}\end{equation*}
}

\noindent
from which one only have to check the non-integrality of the respective values for $k$ up to 4863, 7295, 16895, 14591, respectively.

\section{Explicit examples}\label{sec-cases}
In the sequel, we denote by ${\displaystyle \delta=q\frac{d}{dq}}$ the Euler differential.
We recall Ramanujan's classical results
\begin{equation}\label{eqn-Ramanujan1}
\delta E_2=\frac{E^2_2-E_4}{12},\quad \delta E_4=\frac{E_2E_4-E_6}{3},\quad \delta E_6=\frac{E_2E_6-E_4^2}{2},
\end{equation}
and their consequence
\begin{equation}\label{eqn-Ramanujan2}
\delta^2 E_2=\frac{1}{72}(E^3_2-3E_2E_4+2E_6),\quad \delta^2 E_4=\frac{5}{36}(E^2_2E_4-2E_2E_6+E^2_4).
\end{equation}

\begin{prop}[weight 6 depth $3$]\label{prop-w6d3}
The $q$-expansion of the normalized extremal quasimodular form $f^{(3)}_6$ of weight $6$ and depth $3$ is given by 
\begin{equation}\label{eqn-w6d3}
f^{(3)}_6=\frac{5E^3_2-3E_4E_2-2E_6}{51840}=\frac{1}{6}\sum_{n=1}^{\infty}\big(n\sigma_3(n)-n^2\sigma_1(n)\big)q^n.
\end{equation}
It has integral $q$-expansion with all coefficients of $q^n$ for $n\geq 2$ being positive.
\end{prop}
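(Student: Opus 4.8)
The plan is to establish the two displayed equalities in (\ref{eqn-w6d3}) first, and then to read off integrality and positivity from the arithmetic expression on the right. For the first equality I would invoke the dimension formula of \S\ref{sub-qmf}: since $w(r+1)=24\equiv 0\pmod{12}$ and $M_2=0$, we have $\dim\QMF^3_6=3$ with basis $\{E^3_2,E_2E_4,E_6\}$ (the summand $E^2_2M_2$ vanishing). Writing $f^{(3)}_6=\alpha E^3_2+\beta E_2E_4+\gamma E_6$ and imposing the extremal conditions---namely that the coefficients of $q^0$ and $q^1$ vanish while that of $q^2$ equals $1$, since $\lambda=2$ here---yields a $3\times 3$ linear system whose unique solution is $(\alpha,\beta,\gamma)=(5,-3,-2)/51840$. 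Alternatively, since such extremal forms are unique up to scalar, it suffices to check that the right-hand side lies in $\QMF^3_6$, has $q$-expansion beginning at $q^2$, and is normalized; either route is routine.

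For the second equality I would pass to the Euler operator $\delta=q\,d/dq$ and use Ramanujan's identities (\ref{eqn-Ramanujan1})--(\ref{eqn-Ramanujan2}). From $\sum_{n\ge 1}\sigma_3(n)q^n=(E_4-1)/240$ one gets $\sum_{n\ge 1}n\sigma_3(n)q^n=\delta E_4/240=(E_2E_4-E_6)/720$, and from $\sum_{n\ge 1}\sigma_1(n)q^n=(1-E_2)/24$ one gets $\sum_{n\ge 1}n^2\sigma_1(n)q^n=-\delta^2E_2/24=-(E^3_2-3E_2E_4+2E_6)/1728$. Substituting these into $\tfrac16\sum(n\sigma_3(n)-n^2\sigma_1(n))q^n$ and clearing denominators over $51840$ reproduces $(5E^3_2-3E_2E_4-2E_6)/51840$, matching the closed form above.

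For integrality it suffices to show $6\mid n\sigma_3(n)-n^2\sigma_1(n)$ for every $n\ge 1$. Modulo $2$ this is immediate from $d^3\equiv d$ and $n^2\equiv n$. Modulo $3$, Fermat gives $\sigma_3(n)\equiv\sigma_1(n)$, so the quantity is congruent to $n(1-n)\sigma_1(n)$, which vanishes when $n\equiv 0,1\pmod 3$; for $n\equiv 2\pmod 3$ one must separately prove $3\mid\sigma_1(n)$. This last point is the step I expect to be the main obstacle, as it is genuinely arithmetic rather than formal: it follows from multiplicativity of $\sigma_1$, since $n\equiv 2\pmod 3$ forces some prime $p\equiv 2\pmod 3$ to divide $n$ to an odd power $a$, whence the local factor $1+p+\cdots+p^a\equiv 0\pmod 3$.

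Finally, positivity for $n\ge 2$ amounts to $\sigma_3(n)>n\sigma_1(n)$, i.e. $\sum_{d\mid n}d(d^2-n)>0$. The key idea is to pair $d$ with $n/d$: the combined contribution of the two divisors equals $(n-d^2)^2(n+d^2)/d^3$, so that $\sigma_3(n)-n\sigma_1(n)=\sum_{d\mid n,\,d<\sqrt{n}}(n-d^2)^2(n+d^2)/d^3$, the central term (when $n$ is a square) being $0$. Every summand is nonnegative, and the term $d=1$ contributes $(n-1)^2(n+1)>0$ for $n\ge 2$, giving strict positivity; the coefficient of $q^1$ vanishes and that of $q^2$ equals $1$, consistent with $\lambda=2$ and the normalization.
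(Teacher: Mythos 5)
Your proposal is correct, and its core coincides with the paper's proof: the second equality is obtained exactly as in the paper by writing $\tfrac16\sum(n\sigma_3(n)-n^2\sigma_1(n))q^n=\delta E_4/1440+\delta^2E_2/144$ and applying (\ref{eqn-Ramanujan1})--(\ref{eqn-Ramanujan2}), and your positivity argument via pairing $d$ with $n/d$ is the same computation the paper performs (your $(n-d^2)^2(n+d^2)/d^3$ is, after multiplying by $n=d_1d_2$, the paper's $d_1d_2(d_1-d_2)^2(d_1+d_2)$). Two sub-steps differ. For the first equality the paper simply cites \cite[p.459]{KK1}, whereas you rederive it from the dimension formula and the extremal conditions; that is a legitimate and self-contained alternative (and your verification that the $3\times3$ system is nonsingular is easily checked). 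For integrality, you argue globally modulo $2$ and $3$, and the step you single out as the main obstacle --- showing $3\mid\sigma_1(n)$ when $n\equiv 2\pmod 3$ via multiplicativity --- is correct but unnecessary in the paper's treatment: there the integrality falls out term by term, since each summand $d_1d_2(d_1-d_2)^2(d_1+d_2)$ is individually divisible by $2$ (one of $d_1d_2$, $d_1-d_2$ is even) and by $3$ (if neither $d_i$ is divisible by $3$, then either $d_1\equiv d_2$, killing $(d_1-d_2)^2$, or $d_1+d_2\equiv 0$). So the paper's divisor-pairing identity delivers integrality and positivity simultaneously, while your route separates them and pays a small extra arithmetic cost; both are sound.
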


\begin{proof}
The first equality of (\ref{eqn-w6d3}) is well-known (cf.\ \cite[p.459]{KK1}).
Then by (\ref{eqn-Ramanujan1}) and (\ref{eqn-Ramanujan2}), we calculate
$$
f^{(3)}_6=\frac{5E^3_2-3E_4E_2-2E_6}{51840}=\frac{\delta E_4}{1440}+\frac{\delta^2 E_2}{144}=\frac{1}{6}\sum_{n=1}^{\infty}\big(n\sigma_3(n)-n^2\sigma_1(n)\big)q^n,
$$
whence the second equality.
We then calculate $a(n)=n\sigma_3(n)-n^2\sigma_1(n)$ for $n\geq 2$ as
\begin{equation*}
\begin{split}
a(n)&=\sum_{d|n}(nd^3-n^2d)=\sum_{n=d_1d_2}nd^2_1(d_1-d_2)\\
&=\sum_{{n=d_1d_2}\atop d^2_1\leq n}n(d^2_1-d^2_2)(d_1-d_2)=\sum_{{n=d_1d_2}\atop d^2_1\leq n}d_1d_2(d_1-d_2)^2(d_1+d_2).
\end{split}
\end{equation*}
It then follows that each $a(n)$ is positive; moreover, it is easy to see that $a(n)$ is a multiple of $6$.
Hence $f^{(3)}_6$ has integral $q$-expansion with all coefficients of $q^n$ for $n\geq 2$ being positive, as desired.
\end{proof}

\begin{rem}\label{rem-w6d3}
It has been shown in \cite{D} that the $d$-th $q$-coefficient of $f^{(3)}_6$ ($=F_2(q)$ in [loc.\ cit., p.157]) counts the numbers of simply ramified coverings of genus two and degree $d$ of an elliptic curve over $\CC$, which already shows the integrality and positivity in Proposition \ref{prop-w6d3}.
\end{rem}

\begin{prop}[weight $8$ depth $2$]\label{prop-w8d2}
The $q$-expansion of the normalized extremal quasimodular form $f^{(2)}_8$ of weight $8$ and depth $2$ is given by 
\begin{equation}\label{eqn-w8d2}
f^{(2)}_8=\frac{5E^2_4+2E_6E_2-7E_4E_2^2}{362880}=\frac{1}{30}\sum_{n=1}^{\infty}\big(n\sigma_5(n)-n^2\sigma_3(n)\big)q^n.
\end{equation}
It has integral $q$-expansion with all coefficients of $q^n$ for $n\geq 2$ being positive.
\end{prop}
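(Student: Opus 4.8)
The plan is to follow the same three-step strategy as in the proof of Proposition \ref{prop-w6d3}: first pin down the closed form, then convert it to the divisor-sum expression via Ramanujan's identities (\ref{eqn-Ramanujan1}) and (\ref{eqn-Ramanujan2}), and finally read off positivity and integrality from a symmetric factorization of the Fourier coefficients.

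For the two equalities, I would start from $E_4=1+240\sum_{n\ge1}\sigma_3(n)q^n$ and $E_6=1-504\sum_{n\ge1}\sigma_5(n)q^n$, which give $\sum_{n\ge1}n\sigma_5(n)q^n=-\tfrac{1}{504}\delta E_6$ and $\sum_{n\ge1}n^2\sigma_3(n)q^n=\tfrac{1}{240}\delta^2E_4$. Hence
$$
\frac{1}{30}\sum_{n\ge1}\big(n\sigma_5(n)-n^2\sigma_3(n)\big)q^n=-\frac{\delta E_6}{15120}-\frac{\delta^2E_4}{7200}.
$$
Substituting $\delta E_6=\tfrac12(E_2E_6-E_4^2)$ from (\ref{eqn-Ramanujan1}) and $\delta^2E_4=\tfrac{5}{36}(E_2^2E_4-2E_2E_6+E_4^2)$ from (\ref{eqn-Ramanujan2}), and clearing denominators over the least common denominator $362880=9!$, collapses the right-hand side to $(5E_4^2+2E_6E_2-7E_4E_2^2)/362880$, which is the second equality. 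Since the series on the left visibly starts at $q^2$ (the $n=1$ term vanishes, and the $n=2$ coefficient equals $30$, so after dividing by $30$ the leading coefficient is $1$), and since $(5E_4^2+2E_6E_2-7E_4E_2^2)/362880$ is manifestly a quasimodular form of weight $8$ and depth $2$ vanishing to the extremal order $\lambda=\lfloor 8/4\rfloor=2$, it must be the normalized extremal quasimodular form $f^{(2)}_8$ by uniqueness; this simultaneously yields the first equality. (Alternatively, the first equality can be quoted outright, as was done in Proposition \ref{prop-w6d3}.)

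For positivity, I would write $a(n)=n\sigma_5(n)-n^2\sigma_3(n)=\sum_{n=d_1d_2}nd_1^4(d_1-d_2)$ and symmetrize under $d_1\leftrightarrow d_2$, using $(d_1-d_2)(d_1^4-d_2^4)=(d_1-d_2)^2(d_1+d_2)(d_1^2+d_2^2)$, to obtain
$$
a(n)=\sum_{\substack{n=d_1d_2\\ d_1\le d_2}}d_1d_2(d_1-d_2)^2(d_1+d_2)(d_1^2+d_2^2).
$$
Every summand is nonnegative, and for $n\ge2$ the factorization $(d_1,d_2)=(1,n)$ contributes a strictly positive term, so $a(n)>0$ for $n\ge2$, as asserted.

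The main obstacle, and the genuinely new feature compared with Proposition \ref{prop-w6d3}, is the integrality claim $30\mid a(n)$: one must gain a prime factor $5$ beyond the factor $6$ of the weight-$6$ case. I would prove it termwise, showing $30$ divides each $d_1d_2(d_1-d_2)^2(d_1+d_2)(d_1^2+d_2^2)$. Divisibility by $2$ and by $3$ is exactly as before: if $2\nmid d_1d_2$ then $d_1,d_2$ are both odd and $(d_1-d_2)^2$ is even; if $3\nmid d_1d_2$ then $d_1\equiv d_2\pmod3$ gives $3\mid(d_1-d_2)$, while $d_1\not\equiv d_2\pmod3$ forces $3\mid(d_1+d_2)$. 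The new factor $5$ comes precisely from the extra term $d_1^2+d_2^2$: if $5\nmid d_1d_2$ then $d_1^2,d_2^2\in\{1,4\}\pmod5$, and either $d_1^2\equiv d_2^2$, whence $5\mid(d_1-d_2)(d_1+d_2)$, or $\{d_1^2,d_2^2\}\equiv\{1,4\}\pmod5$, whence $5\mid d_1^2+d_2^2$. Thus each summand is divisible by $2$, $3$ and $5$, hence by $30$, so $a(n)\in 30\,\ZZ$ and $f^{(2)}_8$ has integral $q$-expansion, completing the proof.
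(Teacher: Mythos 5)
Your proposal is correct and follows essentially the same route as the paper: Ramanujan's identities to pass between the closed form and $-\tfrac{\delta E_6}{15120}-\tfrac{\delta^2E_4}{7200}$, the symmetrized factorization $d_1d_2(d_1-d_2)^2(d_1+d_2)(d_1^2+d_2^2)$ for positivity, and termwise divisibility by $2$, $3$, $5$ for integrality. The only differences are cosmetic — you run the Eisenstein identity in the opposite direction, add an optional uniqueness argument for the first equality, and spell out the mod-$5$ check that the paper leaves as ``straightforward checking.''
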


\begin{proof}
The first equality of (\ref{eqn-w8d2}) is well-known (cf.\ \cite[p.459]{KK1}).
By (\ref{eqn-Ramanujan1}) and (\ref{eqn-Ramanujan2}), we calculate
$$
f^{(2)}_8=\frac{5E^2_4+2E_6E_2-7E_4E_2^2}{362880}=-\frac{\delta E_6}{15120}-\frac{\delta^2 E_4}{7200}=\frac{1}{30}\sum_{n=1}^{\infty}\big(n\sigma_5(n)-n^2\sigma_3(n)\big)q^n,
$$
whence the second equality.
Similarly to the proof of Proposition \ref{prop-w6d3}, we calculate $a(n)=n\sigma_5(n)-n^2\sigma_3(n)$ for $n\geq 2$ as
$$
a(n)=\sum_{{n=d_1d_2}\atop d^2_1\leq n}d_1d_2(d_1-d_2)^2(d_1+d_2)(d^2_1+d^2_2).
$$
It then follows that each $a(n)$ is positive, and by a straightforward checking, one sees easily that each $d_1d_2(d_1-d_2)^2(d_1+d_2)(d^2_1+d^2_2)$ is divisible by the primes $2$, $3$, and $5$, hence is a multiple of $30$.
Hence $f^{(2)}_8$ has integral $q$-expansion with all coefficients of $q^n$ for $n\geq 2$ being positive, as desired.
\end{proof}

\subsection{End of the proof of Theorem \ref{thm-main}}\label{sub-proof}
It has been shown in \S\ref{sec-d1} that, in depth one $(r=1)$, the set $\mathcal{E}_1$ is contained in 
$$
\{2,6,8,10,12,14,16,18,20,22,24,28,30,32,34,38,54,58,68,80,114,118\}.
$$
In \S\ref{sec-d2}, we have seen that, in depth two ($r=2)$, only weights $4$ and $8$ are possible for the integrality of the $q$-expansion.
The normalized extremal quasimodular form $f^{(2)}_4$ of weight $4$ and depth $2$ is known to be 
$$
\frac{E_4-E^2_2}{288}=-\frac{\delta E_2}{24}
$$
(cf.\ \cite[Example 1.4]{KK1}), which is obviously of integral $q$-expansion.
We have seen in Proposition \ref{prop-w8d2} that the normalized extremal quasimodular form $f^{(2)}_8$ of weight $4$ and depth $2$ has integral $q$-expansion.
Thus we conclude that $\mathcal{E}_2=\{4,8\}$.

As for depth three $(r=3)$, in \S\ref{sec-d3}, we have shown that $\mathcal{E}_3$ is contained in the singleton set $\{6\}$.
It is well-known (\cite{D}) that the normalized extremal quasimodular form $f^{(3)}_6$ of weight $6$ and depth $3$ has integral $q$-expansion, which we have shown by an elementary argument in Proposition \ref{prop-w6d3}.
Hence we have $\mathcal{E}_3=\{6\}$.

Finally, in depth four $(r=4)$ case, we have shown in \S\ref{sec-d4} that there exists no weight $w$ that allows normalized extremal quasimodular form having integral $q$-expansion, i.e., $\mathcal{E}_4=\emptyset$.
This finishes the proof of Theorem \ref{thm-main}.

\frenchspacing
\begin{small}

\medskip\noindent
{\sc Department of Mathematics, Tokyo Institute of Technology, 2-12-1 Ookayama, Meguro, Tokyo 152-8551, Japan} (e-mail: {\tt mmts.o\_y@icloud.com})

\medskip\noindent
{\sc Department of Mathematics, Tokyo Institute of Technology, 2-12-1 Ookayama, Meguro, Tokyo 152-8551, Japan} (e-mail: {\tt bungen@math.titech.ac.jp})
\end{small}
\end{document}